\newtheorem{rem}[theorem]{Remark}
\title{Reconstruction of Lam\'{e} moduli and density
       at the boundary enabling directional elastic wavefield decomposition}
\author{Maarten V. de Hoop\thanks{Simons Chair in Computational and
    Applied Mathematics and Earth Science, Rice University, Houston,
    TX 77005, USA (\tt{mdehoop@rice.edu}).}
\and
Gen Nakamura\thanks{Department of Mathematics, Hokkaido University,
  Sapporo 060-0810, Japan (\tt{nakamuragenn@gmail.com}).}
\and
Jian Zhai\thanks{Department of Computational and Applied Mathematics,
  Rice University, Houston, TX, 77005, USA
  (\tt{jian.zhai@rice.edu}).}}
\begin{document}
\maketitle
\slugger{siap}{xxxx}{xx}{x}{x--x}

\begin{abstract}
We consider the inverse boundary value problem for the system of
equations describing elastic waves in isotropic media on a bounded
domain in $\mathbb{R}^3$ via a finite-time Laplace transform. The data is
the dynamical Dirichlet-to-Neumann map. More precisely, using the full
symbol of the transformed Dirichlet-to-Neumann map viewed as a
semiclassical pseudodifferential operator, we give an explicit
reconstruction of both Lam\'{e} parameters and the density, as well as
their derivatives, at the boundary. We also show how this boundary reconstruction leads to a decomposition of 
incoming and outgoing waves.
\end{abstract}

\begin{keywords}inverse boundary value problem, layer stripping,
elastic waves, isotropy\end{keywords}

\begin{AMS}35R30, 35L10\end{AMS}

\pagestyle{myheadings} \thispagestyle{plain} \markboth{Reconstruction
  of Lam\'{e} parameters and density at the boundary}{Maarten
  V. de Hoop, Gen Nakamura and Jian Zhai}

\section{Introduction}${}$
\newline
\indent
We let $\Omega\subset\mathbb{R}^3$ be a bounded domain with a smooth
boundary $\partial\Omega$. We consider the following initial boundary
value problem for the system of equations describing elastic waves
\begin{equation}\label{EQ no.1}
\begin{cases}
\rho\partial^2_tu=\operatorname{div} (\mathbf{C}\varepsilon(u))=:Lu~~\text{in}~\Omega_T=\Omega\times(0,T) ,\\
u=f~~\text{on}~\Sigma=\partial\Omega\times(0,T) ,\\
u(x,0)=\partial_t u(x,0)=0~~\text{in}~\Omega,
\end{cases}
\end{equation}
with $f(x,0)=0$ and $\frac{\partial}{\partial t}f(x,0)=0$ for $x\in\partial\Omega$.
Here, $u$ denotes the displacement vector and
$\varepsilon(u)=(\varepsilon_{ij}(u))=(\nabla u+(\nabla u)^T)/2$ the
linear strain tensor which is the symmetric part of $\nabla u$. Furthermore,
$\mathbf{C}=\mathbf{C}(x)=(\dot{C}_{ijkl}(x))$ is the elasticity
tensor and $\rho$ is the density of mass. We assume that $\mathbf{C}$ is
isotropic, that is,
\begin{equation}\label{Cartesian tensor}
\dot{C}_{ijkl}(x)=\lambda(x) \delta_{ij}\delta_{kl}+\mu(x)(\delta_{ik}\delta_{jl}+\delta_{il}\delta_{jk})
\end{equation}
with Kronecker's delta $\delta_{ij}$ and Lam\'e moduli $\lambda,\,\mu\in C^\infty(\overline\Omega)$ such that $\mu>0$ and $\lambda+2\mu>0$ on $\overline\Omega$. Also, $\rho\in C^\infty(\overline{\Omega})$ and $\rho>0$ on $\overline{\Omega}$.

The hyperbolic or dynamical Dirichlet-to-Neumann map (DN map)
$\Lambda_T$ is defined according to
\begin{equation}\label{Lambda_T}
\Lambda_T:H^2(\Sigma)\ni f\mapsto\partial_L u:=(\mathbb{C}\varepsilon(u))\nu|_{\partial\Omega}\in C([0,T],H^{1/2}(\partial\Omega)),
\end{equation}
where $u$ is the solution of (\ref{EQ no.1}),
$\mathbb{C}\varepsilon(u)$ is a $3\times3$ matrix with its $(i,k)$
component $(\mathbb{C}\varepsilon(u))_{ik}$ given by
$(\mathbb{C}\varepsilon(u)) _{ik}=\sum_{j,l=1}^3
\dot{C}_{ijkl}\varepsilon_{kl}(u)$, $\nu$ is the outward unit normal
to $\partial\Omega$. Physically, $\partial_L u$ denotes the traction at $\partial\Omega$. 

In this paper, we consider
the inverse problem of recovering $\lambda,\mu,\rho$, as well as all
their derivatives, at the boundary $\partial\Omega$ from $\Lambda_T$. Our
major result for this inverse problem is as follows.

\medskip

\begin{theorem} \label{main thm}
The DN map $\Lambda_T$ identifies $\lambda, \mu, \rho$ and all their
derivatives on $\partial\Omega$ uniquely. There is an explicit
reconstruction procedure for these identification.
\end{theorem}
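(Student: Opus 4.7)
The plan is to reduce the hyperbolic problem to a semiclassical elliptic one via the finite-time Laplace transform, to compute the full symbol of the transformed DN map as a semiclassical pseudodifferential operator on $\partial\Omega$, and to read off $\lambda, \mu, \rho$ and all their normal derivatives from successive homogeneous components of this symbol; tangential derivatives on $\partial\Omega$ then come for free by differentiating the recovered boundary values.

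First I would apply the Laplace transform $\hat{u}(x, s) = \int_0^T e^{-st} u(x, t)\, dt$ with $s \gg 1$ and set $h = 1/s$. Since the initial data vanish, the only boundary terms produced by integrating $\partial_t^2 u$ by parts come from $t = T$ and decay like $e^{-sT}$, so the PDE becomes
\[
   (h^2 L - \rho)\hat{u} = R(x,h), \qquad \hat{u}|_{\partial\Omega} = \hat{f},
\]
with $R = O(h^\infty)$. This is a semiclassical elliptic system, and the Laplace transform $\hat{\Lambda}(h)$ of $\Lambda_T$ is a matrix-valued semiclassical pseudodifferential operator of order $1$ on $\partial\Omega$ whose full symbol is determined by $\Lambda_T$.

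Next, in boundary normal coordinates $(x', x_n)$, I would factor the operator modulo smoothing as
\[
   h^2 L - \rho \equiv (hD_{x_n} - B^+)(hD_{x_n} - B^-),
\]
where $B^{\pm}(x, hD_{x'}; h)$ are matrix-valued semiclassical symbols whose principal eigenvalues have $\pm$ imaginary parts, corresponding to the P- and S-wave vertical slownesses. The full symbols of $B^{\pm}$ are obtained recursively from matrix Riccati relations determined by the coefficients of $L$ and $\rho$. The transformed DN map is then, modulo smoothing, the composition of the traction matrix with $B^{-}$ restricted to $\{x_n = 0\}$. Reading off its principal symbol gives an algebraic relation among $\lambda(x')$, $\mu(x')$, $\rho(x')$ and $\xi'$ that inverts uniquely to recover the three boundary values. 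Induction on the order of the symbol then recovers successive normal derivatives: the $j$-th subprincipal symbol depends linearly on $\partial_{x_n}^j(\lambda, \mu, \rho)|_{\partial\Omega}$ together with quantities already known by the inductive hypothesis and by tangential differentiation.

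The main obstacle will be the matrix factorization and the justification of the inductive algebraic inversion. The principal symbol in $\xi_n$ has a simple root for P and a double root for S, so one must carry out a careful block decomposition that preserves the separation of wave modes at every order in $h$; this is where the matrix character of elasticity (as opposed to the scalar Helmholtz case) requires real work. Concurrently, at each step of the induction one must verify that the linear map from the unknown normal derivatives of $(\lambda, \mu, \rho)$ to the coefficients of the symbol is nonsingular, which amounts to an explicit determinant computation relying on the distinctness of the P- and S-wave slownesses.
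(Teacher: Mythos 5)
Your proposal follows essentially the same route as the paper: finite-time Laplace transform to a semiclassical elliptic system with $\mathcal{O}(h^\infty)$ error, factorization of the operator in boundary normal coordinates into first-order factors distinguished by the sign of the imaginary part of their spectrum, identification of the DN symbol with the incoming factor, explicit inversion of the principal symbol for $(\lambda,\mu,\rho)$, and induction on the lower-order symbols for the normal derivatives. The only notable difference is that the paper never performs the P/S block decoupling you anticipate as the main obstacle --- it works with the full $3\times 3$ matrix factorization (via the Gohberg--Lancaster--Rodman contour-integral construction of $S_0$), which handles the double S-root automatically and makes the nonsingularity of the inversion a direct computation on the explicit entries of $\dot{D}\dot{S}_0+\dot{R}^T$.
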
\\

\begin{rem}\label{rem12}
Since the procedure we present to recover $\lambda,\mu,\rho$ and their
derivatives at $\partial\Omega$ is local, we also have a localized
version of Theorem \ref{main thm} with partial boundary data. That is,
in the definition of $\Lambda_T$ we can replace
$(\mathbb{C}\varepsilon(u)) \nu|_{\partial\Omega}$ by
$(\mathbb{C}\varepsilon(u)) \nu|_{\Gamma_0}$ and confine the boundary
sources, $f$, to those with $\operatorname{supp} \, f(.,t) \subset
\overline{\Gamma_0}\ (t \in (0,T))$, where $\Gamma_0$ is a
relatively open subset of $\partial\Omega$. We can recover $\lambda,
\mu, \rho$ and all their derivatives at $\Gamma_0$. As an additional explanation for this which should 
be given later, see the last paragraph of Section 2.
\end{rem}

\medskip

The uniqueness of the inverse problem considered here was
established by Rachele \cite{Rachele}, but giving a procedure to
recover the parameters, $(\lambda, \mu, \rho)$, has been left open for
over 15 years. One of the complications is the occurrence of two metrics in
the dynamical system of equations that cannot be straightforwardly
separated at the boundary. Indeed, the usual special solutions
including high-frequency asymptotic ones or progressive wave solutions
based on polarization decoupling are coupled at the boundary.
 
\medskip
Moreover, the determination of $(\lambda, \mu, \rho)$ on boundary implies the following

\medskip

\begin{corollary}
In addition to the conditions appearing in Theorem~\ref{main thm}, let
$\lambda, \mu, \rho$ be real analytic in the neighborhood of
$\overline\Omega$. Then $\Lambda_T$ determines uniquely $\lambda, \mu,
\rho$ on $\overline\Omega$.
\end{corollary}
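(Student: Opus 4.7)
The plan is to reduce the corollary to Theorem~\ref{main thm} by invoking the identity principle for real analytic functions. Suppose two admissible triples $(\lambda_j,\mu_j,\rho_j)$, $j=1,2$, are real analytic on a common open neighborhood $U \supset \overline\Omega$ and give rise to the same DN map $\Lambda_T$. The goal is to show they coincide on $\overline\Omega$.

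First, fix any boundary point $x_0\in\partial\Omega$. Theorem~\ref{main thm} provides, constructively and uniquely from $\Lambda_T$, every tangential and normal derivative of $\lambda,\mu,\rho$ at $x_0$. Working in a boundary normal coordinate chart near $x_0$, these tangential and normal derivatives together span all ambient Cartesian partial derivatives $\partial^{\alpha}$ at $x_0$. Hence
\begin{equation*}
\partial^\alpha \lambda_1(x_0) = \partial^\alpha \lambda_2(x_0), \qquad \partial^\alpha \mu_1(x_0) = \partial^\alpha \mu_2(x_0), \qquad \partial^\alpha \rho_1(x_0) = \partial^\alpha \rho_2(x_0),
\end{equation*}
for every multi-index $\alpha \in \mathbb{N}^3$.

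Next, by real analyticity each of $\lambda_j,\mu_j,\rho_j$ equals its Taylor series at $x_0$ on some open ball $B_r(x_0)\subset U$. Identical Taylor coefficients at $x_0$ therefore force $\lambda_1\equiv\lambda_2$, $\mu_1\equiv\mu_2$, $\rho_1\equiv\rho_2$ on the nonempty open set $B_r(x_0)\cap\Omega$. Since $\Omega$ is a connected open subset of $U$, the identity principle for real analytic functions on connected open sets propagates this equality throughout $\Omega$, and continuity then extends it to $\overline\Omega$.

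There is no real obstacle here: the substantive content — extracting the full jet of $(\lambda,\mu,\rho)$ at the boundary from the semiclassical full symbol of $\Lambda_T$ — is packaged entirely inside Theorem~\ref{main thm}. The only bookkeeping item is the passage between boundary-adapted and Cartesian derivatives at $x_0$, which is immediate in boundary normal coordinates. Note that connectedness of $\Omega$ is used; if $\Omega$ were disconnected one would have to apply the argument on each connected component, choosing a boundary point from each.
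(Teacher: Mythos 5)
Your argument is correct and is exactly the (implicit) argument the paper intends: Theorem~\ref{main thm} yields the full jet of $(\lambda,\mu,\rho)$ at a boundary point, and the identity principle for real analytic functions on the connected set $\Omega$, together with continuity up to $\overline\Omega$, finishes the proof. The paper does not spell this out beyond a remark, so no further comparison is needed.
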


\medskip
A brief remark for this corollary should be given. Although there is a more general result by Rachele \cite{Rachele},\cite{Rachele2},
the context and argument of deriving this corollary differs from those of Rachele's.

\medskip
For the static elastic inverse boundary value problem, an
explicit reconstruction of $\lambda$ and $\mu$ at the boundary from
the full symbol of the static DN map was obtained \cite{NU3, NU}. For
the reconstruction of a transversely isotropic elasticity tensor, see
\cite{NTU}. The approach was originally developed by Sylvester and
Uhlmann \cite{SU} for the electrical impedance tomography problem. The approach is also applied to Maxwell's equations \cite{S, M}. We
generalize this type of reconstruction to dynamical elastic inverse
boundary value problems.  We note that our procedure is quite general and
can be extended from isotropy to anisotropy with certain
symmetries, which is the subject of a forthcoming paper.

The key component of the reconstruction is a connection between
$\Lambda_T$ and the asymptotic expansion of the DN map, a semiclassical pseudodifferential operator $\Lambda^h$ say,
for some elliptic system of equations containing a small parameter
$h$ via a finite-time Laplace transform. M. Ikehata has been using the finite Laplace transform effectively to develop his enclosure method both for parabolic equations and hyperbolic equations, see \cite{Ikehata} and reference therein. For the convenience of our description, the partial differential operator of this system is referred by $\mathcal{M}$. We will identify $(\lambda,
\mu, \rho)$ and all of their derivatives from $\Lambda^h$ by factorizing $\mathcal{M}$
into the product of two first order semiclassical pseudodifferential operators with small parameter $h=\frac{1}{\tau}$, where $\tau$ is nothing but the
Laplace variable of this transform. Also this factorization is nothing but the one used to provide the up/down going decomposition of waves which is equivalent to the incoming/downgoing decomposition of waves in the Laplace domain. Further this decomposition can be linked to the corresponding decomposition in the space time domain. We will briefly discuss about this connection more precisely at the end of this paper. The up/down going decomposition for scalar waves is discussed in \cite{Stolk}, and for elastic waves in \cite{dHdH}.  In this paper, we connect the up/down going decomposition with Dirichlet-to-Neumann map.

\begin{figure}[h]
\begin{center}
\includegraphics[width=2.5 in]{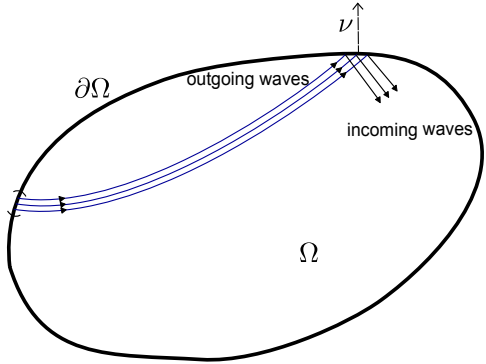}
\caption{incoming/outgoing waves.}
\end{center}
\end{figure}

Concerning our inverse problem, there are two byproducts of the factorization. The one is the explicit form of the principal symbol of $\Lambda^h(s)$ and relation of its $s$-derivatives to the non-principal symbols of $\Lambda^h(s)$. Here $\Lambda^h$ is the DN map defined likewise $\Lambda^h$ on the boundary $\Gamma(s)$ of the subdomain $\Omega(s)=\{x\in\Omega:\text{dist}(x,\partial\Omega)>s\}$  of $\Omega$ with $0<s\ll 1$. The other is that a modification  $\hat{\Lambda}(s)$ (cf. \eqref{modified DNmap}) of $\Lambda^h(s)$ satisfies a Riccati type equation. By solving the Riccati type equation, which is an initial value problem, one can propagate the data into at least a thin layer near the boundary. This technique is known as invariant embedding in extensive geophysics literature \cite{BW, CDK1, CDK, dHdH, MF}.

Knowing all the derivatives of the coefficients at $\partial\Omega$ and using the Riccati equation for $\hat{\Lambda}(s)$, we
can generate an approximation of $\Lambda_T(s)$ on 
$\Gamma(s)$. Then,
we can get an approximation of $\lambda, \mu, \rho$ and their derivatives at
$\Gamma(s)$. Repeating this process, leads to an approximation for
$\lambda, \mu, \rho$ layer by layer in the interior of $\Omega$, using
the DN map $\Lambda_T$ as the data. The associated algorithm is called
layer stripping. The layer stripping was first developed for the
electrical impedance tomography problem in \cite{CI,SCII}. Nakamura,
Tanuma and Uhlmann \cite{NTU} developed such an algorithm for the
static elastic inverse boundary value problem in the case of
transverse isotropy.

The key application of the problem we are considering is (reflection) seismology. In actual seismic acquisition, raw vibroseis data are modeled by the local Neumann-to-Dirichlet (ND) map: The boundary values are given by the normal traction underneath the base plate of a vibroseis and are zero (€˜free surface) elsewhere, while the particle displacement (in fact, velocity) is measured by geophones located in a subset of the boundary (Earth's surface) (see \cite{B}). Although, the local dynamical Dirichlet-to-Neumann map and Neumann-to-Dirichlet map do not have the same information, the transformed ND map and transformed DN map are microlocally inverse to each other. Since we are only dealing with the symbol of transformed DN map, the results of this paper apply to the practical setting.

The remainder of this paper is organized as follows. In Section 2, we
give an asymptotic identity which connects $\Lambda_T$ with $\Lambda^h$
via the finite-time Laplace transform introducing variable
$\tau=\frac{1}{h}$. Based on this identity, we only have to find a reconstruction
procedure using $\Lambda^h$. In a similar way, DN maps $\Lambda^h(s)$ are
defined on each $\Gamma(s)$. In Section 3, the full symbol of semiclassical
pseudodifferential operator $\Lambda^h(s)$ 
is analyzed using the factorization of $\mathcal{M}$. Also, as a byproduct of the factorization, we develop a layer stripping algorithm. Section 4 is devoted to giving a procedure and formulas for the
reconstruction of $(\lambda, \mu, \rho)$ and all their derivatives,
from the explicit form of the principal symbol of $\Lambda^h(s)$ at each
$\Gamma(s)$ for $0 \le s \ll 1$ in terms of the boundary normal
coordinates associated with $\Gamma(s)$. In the final section, Section 5, we will discuss about an another implication of the factorization. That is we give the aforementioned link between the outgoing/incoming decomposition of waves in the Laplace domain and that of in the space time domain.

\section{Reduction to an elliptic boundary value problem with a small parameter}${}$
\newline
\indent
First we introduce a family of symbol classes for semiclassical pseudodifferential operators.
Let $A(\cdot,\cdot;\cdot):\mathbb{R}^{2n}\times(0,h_0)\rightarrow \mathbb{C}^{\tilde{q}\times \tilde{q}}$ be a function that is smooth in $(x,\xi)\in\mathbb{R}^{2n}$ depending on $h\in(0,h_0]$ with a small $h_0>0$. We say that for $m\in\mathbb{R}$, $A$ belongs to a symbol class $\mathcal{S}(m)$, if for any $\alpha,\,\beta\in\mathbb{Z}_+^n$, there exists a
constant $C_{\alpha,\beta}>0$ such that
\[|D_x^\alpha D_{\xi}^\beta A(x,\xi;h)|\leq C_{\alpha,\beta}\langle \xi\rangle^m,\,\,(x,\xi)\in\mathbb{R}^{2n},\,h\in(0,h_0],\]
where $\mathbb{Z}_+=\mathbb{N}\cup\{0\}$, $\langle\xi\rangle=\sqrt{1+|\xi|^2}$.

We say that $A\in \mathcal{S}(m)$ is called a classical symbol if for any $\alpha\in\mathbb{Z}_+^n$ and $N=1,2,\cdots$, there exist a constant $C_{\alpha,N}>0$ such that
\[|\partial^\alpha(A-\sum_{j=0}^{N-1} h^j A_j)|\leq C_{\alpha,N}h ^N\langle\xi\rangle^m,\,\,(x,\xi)\in\mathbb{R}^{2n}\]
with each $A_j\in\mathcal{S}(m)$ independent of $h$, and write
\[A\sim\sum_{j=0}^\infty h^j A_j~~\mathrm{mod}~\mathcal{O}(h^\infty\mathcal{S}(m)).\]
$A_0$ and $\sum_{j=0}^{\infty} h^j A_j$ are called the the principal symbol $\sigma(A)$ and full symbol $\tilde{\sigma}(A)$ of $A$, respectively. In this paper we only consider classical symbols. We also denote for $A,\,A'\in\mathcal{S}(m)$,
\[A\sim A'~~\mathrm{mod}~\mathcal{O}(h^k\mathcal{S}(m)),\]
if $A-A'=h^k B$ with $B\in\mathcal{S}(m)$.

For this family of symbol classes $\mathcal{S}(m)$ with $m\in\mathbb{R}$, we use the standard theory of semiclassical pseudodifferential operators (see \cite{Martinez},\cite{Zworski} ). Also we use the above notations and terminologies for symbol classes associated to
semiclassical pseudodifferential operators on compact manifolds. In the sequel $\tilde{Q}$ will be either $\tilde{q}=3$ or $\tilde{q}=6$ which can be easily noticed in contexts. Furthermore, we abuse the notation $\mathcal{O}(h^k\mathcal{S}(m))$ to use it also for the associated
semiclassical pseudodifferential operators.

\medskip
Now we consider the following boundary value problem:
\begin{equation}\label{transformed eq}
\begin{cases}
\rho v - h^2\operatorname{div} (\mathbb{C}\varepsilon(v))=0~~\text{in}~\Omega,\\
v=\varphi~~\text{on}~\partial\Omega
\end{cases}
\end{equation}
with a small (real-valued) parameter $h \in (0,h_0]$. We define the corresponding  DN map for (\ref{transformed eq}) according to
\[\Lambda^h: H^{5/2}(\partial\Omega)\ni \varphi\mapsto h\partial_L v=h(\mathbb{C}\varepsilon(v))\nu|_{\partial\Omega}\in H^{3/2}(\partial\Omega),\]
where $v$ solves (\ref{transformed eq}). In a likewise fashion, we
define $\Lambda^h(s)$ by replacing $\Omega$ by $\Omega(s)$ and
$\partial\Omega$ by $\Gamma(s)$ while replacing $\varphi$ by $\psi \in
H^{5/2}(\Gamma(s))$ (cf. \eqref{eq.1 tensorial}) emphasizing the $s$ dependence. Of course, $\Lambda^h
= \Lambda^h(0)$.
We note that
$\Lambda^h$ and $\Lambda^h(s)$ are semiclassical pseudodifferential operators belonging to the class $\mathcal{S}(1)$ with $n=2$. 

We show that we can obtain the full symbol of $\Lambda^h$ from
$\Lambda_T$ via a finite-time Laplace transform. First we introduce the finite-time Laplace transform $w\in H^2(\Omega)$ of $u\in C([0,T],H^2(\Omega))$ by
\[w(x,\tau)=(\mathcal{L}_Tu)(x,\tau)=\int_{0}^{T}u(x,t)e^{-\tau t}\mathrm{d}t\,\,\mbox{\rm with}\,\,\tau>0.\]
In order to establish the connection between $\Lambda_T$ and $\Lambda$, we let
$\chi(t)=t^2\,(t\in[0,T])$ and define
\[\tilde{\Lambda}_T:H^{3/2}(\partial\Omega)\rightarrow W((0,T);\partial\Omega)\]
by
\[\tilde{\Lambda}_T\phi=\Lambda_T(\chi \phi).\]

For any $\phi\in H^{5/2}(\partial\Omega)$, let $u$ solve (\ref{EQ no.1}) with boundary value $f=\chi \phi$. By the estimates for solutions of hyperbolic system \eqref{EQ no.1}, we have 
\[\|\partial_t^j u(\cdot,T)\|_{H^{2-j}(\Omega)}=\mathcal{O}(\Vert\chi\phi\Vert_{H^{2}(\Sigma)})=\mathcal{O}(\Vert\phi\Vert_{H^{5/2}(\partial\Omega)})\]
for $j=0,1$ (see \cite{Wloka}). Because
\[\mathcal{L}_T(\partial_t^2 u)=\tau^2\mathcal{L}_T(u)+\partial_tu(T)e^{-\tau T}+\tau u(T)e^{-\tau T}-\partial_tu(0)-\tau u(0),\]
we have that by applying $\mathcal{L}_T$ to both sides of (\ref{EQ no.1}), and divide by $\tau^2=\frac{1}{h^2}$
\begin{equation}
\begin{cases}
\label{new eq}\rho w-h^2\text{div} (\mathbb{C}\varepsilon(w))=r~~\text{in}~\Omega,\\
w=\mathcal{L}_T(\chi \phi)~~\text{on}~\partial\Omega,
\end{cases}
\end{equation}
where $w=\mathcal{L}_T(u)$ and $r$ has an estimate $\|r\|_{H^1(\Omega)}\leq Ce^{-\kappa\tau T}\|\phi\|_{H^{5/2}(\partial\Omega)}$ with some constant $C$ for any given $\kappa$ satisfying $0<\kappa<1$.

Subtracting $(\ref{transformed eq})$ from (\ref{new eq}) with
$\varphi=\mathcal{L}_T(\chi\phi)$, we find that $z=w-v$ satisfies
\begin{equation}
\begin{cases}
\label{new eq1}\rho z-h^2\operatorname{div} (\mathbb{C}\varepsilon(z))=r~~\text{in}~\Omega ,\\
v=0~~\text{on}~\partial\Omega .
\end{cases}
\end{equation}
Hence, we have
\[\mathcal{L}_T\left(\partial_Lu\right)=\partial_Lw=\partial_Lv+\partial_L z,\]
with
\[
   \left\| h\partial_L z \right\|_{H^{3/2}(\partial\Omega)}
   \leq C\|r\|_{H^1(\Omega)}\leq Ce^{-\kappa\tau T}\|\phi\|_{H^{5/2}(\partial\Omega)},\]
by standard elliptic regularity theory. We observe that 
\[h\partial_L v=h\Lambda^h(\mathcal{L}_T(\chi\phi))\]
and
\[h\mathcal{L}_T\left(\partial_L u\right)=h\mathcal{L}_T\Lambda_T(\chi\phi).\]
Thus, defining $\tilde{\mathcal{L}}_T:H^{5/2}(\partial\Omega)\rightarrow H^{5/2}(\partial\Omega)$ by $\tilde{\mathcal{L}}_T(\phi)=\mathcal{L}_T(\chi \phi)$, we can rewrite the formula above as
\[h\mathcal{L}_T\tilde{\Lambda}_T=\Lambda^h\tilde{\mathcal{L}}_T+\mathcal{O}(e^{-\kappa\tau T}).\]
Here, $\mathcal{O}(e^{-\kappa\tau T})$ denotes an operator from $H^{5/2}(\partial\Omega)$ to $H^{3/2}(\partial\Omega)$ with the estimate
\[\|\mathcal{O}(e^{-\kappa\tau T})\|_{H^{5/2}(\partial\Omega)\rightarrow H^{3/2}(\partial\Omega)}\leq Ce^{-\kappa\tau T}.\]
We note that $\tilde{\mathcal{L}}_T$ is just a multiplication by
$\int_0^T t^2 e^{-\tau t}\mathrm{d}t$, hence it is invertible and $\tilde{\mathcal{L}}_T^{-1}$ can be
estimated by $\mathcal{O}(\tau^{-3})$ for $\tau\gg 1$. Therefore,
\[h\mathcal{L}_T\tilde{\Lambda}_T\tilde{\mathcal{L}}_T^{-1}\sim\Lambda^h\]
modulo an operator in $H^{5/2}(\partial\Omega)\rightarrow H^{3/2}(\partial\Omega)$ with the estimate $\mathcal{O}(h^\infty)$.
Therefore, we can obtain the full symbol of $\Lambda^h$ from
$\mathcal{L}_T\tilde{\Lambda}_T\tilde{\mathcal{L}}_T^{-1}$, due to what we will mention in the last paragraph of this section.

We note that in the above one can choose any smooth function for
$\chi$ that is consistent with the initial conditions such that
$\int_{0}^T \chi(t) e^{-\tau t} \mathrm{d}t$ behaves polynomially in
$\tau$.

The full symbol of a semiclassical pseudodifferential operator can be evaluated by applying to locally supported rapidly oscillating functions \cite{Zworski}.
So the analysis can be local, and thus we have Remark \ref{rem12}.

\section{Analysis of the symbol of $\Lambda^h(s)$}${}$
\newline
\indent
Given a boundary point $p_0 \in \Gamma(s)$, for any $x\in\Omega(s)$
near $p_0$, we use the boundary normal coordinates
$x=(x^1(p),x^2(p),x^3)=(y^1, y^2,x^3)=(y',x^3)$, where $p\in\Gamma(s)$
is the nearest point to $x$, $x^3=\operatorname{dist}(x,p)$, and
$(x^1(p),x^2(p))$ are the local coordinates of $\Gamma(s)$ near
$p_0$. Then $\Gamma(s)$ is locally given as $x^3=s$. Let
$(\xi_1,\xi_2,\xi_3)$, $(\eta_1,\eta_2,\eta_3)$ be conormal vectors
with respect to the coordinates $(x_1,x_2,x_3)$, $(x^1,x^2,x^3)$ such
that $\sum_{j=1}^3\xi_j \mathrm{d}x_j=\sum_{j=1}^3\eta_j
\mathrm{d}x^j$. We will use the notation $\eta =
(\eta_1,\eta_2,\eta_3)=(\eta',\eta_3)$. In boundary normal coordinates,
equation (\ref{transformed eq}) attains the form
\begin{equation}\label{eq.1 tensorial}
\begin{cases}
(\mathcal{M}v)^i=\rho g^{ik}v_k-h^2\displaystyle\sum_{j,k,l=1}^3\nabla_j (C^{ijkl}\varepsilon_{kl}(v))=0~\text{in}~\{x^3>s\}~\text{for}~1\leq i\leq 3,\\
v^i|_{x^3=s}=\psi^i,~~1\leq i\leq 3.
\end{cases}
\end{equation}
where $\nabla_j$ is the covariant derivative with respect to $\frac{\partial}{\partial x^j}$ and  $\varepsilon_{kl}(v)=2^{-1}(\nabla_l v_k+\nabla_k v_l)$ is the linear strain tensor, 
\[C^{ijkl}(x)=\sum_{a,b,c,d=1}^3 \frac{\partial x^i}{\partial x_a}\frac{\partial x^j}{\partial x_b}\frac{\partial x^k}{\partial x_c}\frac{\partial x^l}{\partial x_d}\dot{C}_{abcd}(x),\]
with $\dot{C}_{abcd}(x)$ given by \eqref{Cartesian tensor}. The induced metric $G(x)= (g^{ai}(x))$ is given by
\[g^{ai}(x)=\sum_{r=1}^3\frac{\partial x^a}{\partial x_r}(x)\frac{\partial x^i}{\partial x_r}(x).\]
In terms of Jacobi matrix $J = (\partial x^a/\partial x_r;\, 1\le a,\,r\le 3)$, $G$ takes the form $G = J J^T$.

The expression for $\Lambda^h(s)$ in boundary normal coordinates is
\begin{equation}
{\large(}\Lambda^h(s)\psi{\large)}^i=-h\displaystyle\sum_{k,l=1}^3C^{i3kl}\varepsilon_{kl}(v),\,\,1\le i\le 3
\end{equation}
at $x^3=s$. The full symbol of $\Lambda^h(s)$ can be expanded as
\[\tilde{\sigma}(\Lambda^h(s))(y',\eta')\sim \sum_{j\leq 0} h^{-j}\lambda_{-j}(s)(y',\eta')~~\mathrm{mod}~\mathcal{O}(h^\infty\mathcal{S}(1)),\]
where each $\lambda_j(s)(y',\eta')\in \mathcal{S}(1)$.

Now, we define
\begin{equation}\label{QRD}
\begin{split}
Q(x,\eta') &= \left(\sum_{j,l=1}^2C^{ijkl}(x)\eta_j\eta_l;~1\le i,\, k\,\le 3 \right) ,\\
R(x,\eta') &= \left(\sum_{j=1}^2C^{ijk3}(x)\eta_j;~1\le i,\, k\,\le 3\right) ,\\
D(x) &=\left(C^{i3k3}(x);~1\le i,\, k\,\le 3 \right) .
\end{split}
\end{equation}
The principal symbol, $M(x,\eta)$, of $\mathcal{M}$ is then given
by
\begin{equation}\label{mathcal M}
M(x,\eta)=D(x)\eta_3^2+(R(x,\eta')+R^T(x,\eta'))\eta_3+Q(x,\eta')+\rho(x) G(x).
\end{equation}

By the assumption, $M(x,\eta)$ is a positive definite matrix for
$x \in \overline{\Omega}$, $\eta \in \mathbb{R}^3 \backslash
0$.  Hence, for fixed $(x,\eta')$,
$\det D^{-1/2}M(x,\eta)D^{-1/2} = 0$
 in $\eta_3$
admits $3$ roots $\eta_3 = \zeta_j~(j=1,2,3)$ with positive imaginary
parts and $3$ roots $\overline{\zeta_j}~(j=1,2,3)$ with negative
imaginary parts. Thus,\\

\begin{lemma}[\cite{GLR}]
There is a unique factorization
\[\tilde{M}(x,\eta)=D(x)^{-1/2} M(x,\eta) D(x)^{-1/2}
=(\eta_3-\tilde{S}_0^*(x,\eta'))(\eta_3-\tilde{S}_0(x,\eta')),\]
with $\operatorname{Spec}(\tilde{S}_0(x,\eta')) \subset \mathbb{C}_+$, where $\operatorname{Spec}(\tilde{S}_0(x,\eta'))$ is the spectrum of $\tilde{S}_0(x,\eta')$.
In the above,
\[\tilde{S}_0(x,\eta'):=\left(\oint_\gamma\zeta \tilde{M}(x,\eta',\zeta)^{-1}\mathrm{d}\zeta\right)\left(\oint_\gamma \tilde{M}(x,\eta',\zeta)^{-1}\mathrm{d}\zeta\right)^{-1},\]
where $\gamma\subset\mathbb{C}_+:=\{\zeta\in\mathbb{C}:\Im{\zeta}:=\text{\rm imaginary part of $\zeta$}>0\}$ is a continuous curve enclosing all the $\zeta_j~(j=1,2,3)$.
\end{lemma}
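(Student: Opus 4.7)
The plan is to reduce the statement to the classical spectral factorization of monic matrix polynomials, since $\tilde{M}$ is, for each fixed $(x,\eta')$, precisely such an object in the variable $\zeta=\eta_3$.

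First I would record the structural facts. Because $M=D\eta_3^2+(R+R^T)\eta_3+(Q+\rho G)$ with $D$ symmetric positive definite, conjugating by $D^{-1/2}$ makes $\tilde{M}(x,\eta)=\zeta^2 I+B_1(x,\eta')\zeta+B_0(x,\eta')$ a \emph{monic} quadratic matrix polynomial with real symmetric $B_0,B_1$. Positive definiteness of $M$ on $\eta\in\mathbb{R}^3\setminus 0$ rules out real zeros of $\det\tilde{M}(\zeta)$, and Hermiticity on the real axis forces the six zeros to split into conjugate pairs; hence exactly three lie in $\mathbb{C}_+$, namely $\zeta_1,\zeta_2,\zeta_3$, and the remaining three are their conjugates in $\mathbb{C}_-$.

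Next I would construct $\tilde{S}_0$ by a Riesz-type spectral projection on a smooth contour $\gamma\subset\mathbb{C}_+$ enclosing $\zeta_1,\zeta_2,\zeta_3$. Following Gohberg--Lancaster--Rodman, the companion linearization of $\tilde{M}$ has a right spectral pair $(I,\tilde{S}_0)$ associated with $\gamma$, and the closed form of that solvent is precisely
\[\tilde{S}_0(x,\eta')=\Bigl(\oint_\gamma \zeta\,\tilde{M}(x,\eta',\zeta)^{-1}\,d\zeta\Bigr)\Bigl(\oint_\gamma \tilde{M}(x,\eta',\zeta)^{-1}\,d\zeta\Bigr)^{-1}.\]
Invertibility of the second integral and the identification $\operatorname{Spec}(\tilde{S}_0)=\{\zeta_1,\zeta_2,\zeta_3\}$ are the content of the standard right-solvent theorem for monic matrix polynomials. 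Once this is in place, I would verify that $\tilde{M}(\zeta)(\zeta-\tilde{S}_0)^{-1}$ extends holomorphically across $\gamma$ and equals a monic linear polynomial $\zeta-T$, producing the factorization $\tilde{M}(\zeta)=(\zeta-T)(\zeta-\tilde{S}_0)$.

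It remains to identify $T$ with $\tilde{S}_0^*$, which I would extract from self-adjointness. Taking Hermitian transposes of $\tilde{M}(\zeta)=(\zeta-T)(\zeta-\tilde{S}_0)$ at $\zeta\mapsto\bar\zeta$ and using $\tilde{M}(\bar\zeta)^*=\tilde{M}(\zeta)$ yields a second factorization $(\zeta-\tilde{S}_0^*)(\zeta-T^*)$; since the right factor of such a factorization is uniquely characterized by having its spectrum in $\mathbb{C}_+$, comparing the two expressions gives $T^*=\tilde{S}_0$, i.e.\ $T=\tilde{S}_0^*$. Uniqueness of the overall factorization then follows because any admissible right factor $\zeta-S$ must have spectrum $\{\zeta_1,\zeta_2,\zeta_3\}$ and is reconstructible from this datum by the same contour formula.

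The genuine obstacle is the spectral-pair step: proving that the moment $\oint_\gamma \tilde{M}(\zeta)^{-1}d\zeta$ is invertible and that the resulting $\tilde{S}_0$ really realizes the upper-half-plane spectrum of $\tilde{M}$. I would invoke the general machinery of Gohberg--Lancaster--Rodman rather than re-derive it; once that step is accepted, all other assertions reduce to direct manipulations of monic matrix polynomials and the Hermitian symmetry of $\tilde{M}$.
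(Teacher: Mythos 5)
Your proposal is correct and matches the paper's treatment: the paper offers no proof of this lemma at all, simply citing Gohberg--Lancaster--Rodman, and your sketch (monic self-adjoint quadratic polynomial with no real spectrum, conjugate-symmetric root splitting, the contour-integral right solvent, and identification of the left factor via Hermitian symmetry) is a faithful expansion of exactly the machinery that citation points to. You correctly isolate the only nontrivial step --- invertibility of $\oint_\gamma \tilde{M}(\zeta)^{-1}\,\mathrm{d}\zeta$ and the spectral-pair identification --- and defer it to [GLR], just as the authors do.
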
\\

Then we have the following factorization of $M(x,\eta)$:
\begin{equation}\label{fac M}
M(x,\eta)=(\eta_3-S_0^*(x,\eta'))D(x)(\eta_3-S_0(x,\eta')),
\end{equation}
where
\[S_0(x,\eta') = D^{-1/2}(x)\tilde{S}_0(x,\eta')D^{1/2}(x).\]
We arrive at

\medskip
\begin{lemma}\label{lemma1}
The operator $\mathcal{M}$ admits a factorization
\begin{equation}\label{decomp}
\begin{array}{l}
\mathcal{M}=(hD_s-S^*(x,hD_{y'};h)\\
\quad\quad\quad\quad\quad\quad+hK(x,hD_{y'}))D(x)\left(hD_s-S(x,hD_{y'};h)\right),
\end{array}
\end{equation}
where $S(x,\eta';h)\in\mathcal{S}(1)$, $K(x,\eta';h)\in\mathcal{S}(0)$.
Moreover, $hD_{y'}=(hD_{y^1},hD_{y^2})$,
$hD_{y^j}=-{\rm i}\,h\partial/\partial y^j$ $(j=1,2)$ and the principal
symbol, $S_0(x,\eta')$, of $S$ satisfies
\begin{equation}\label{spec}
\operatorname{Spec}(S_0(x,\eta'))\subset\mathbb{C}_+ .
\end{equation}
\end{lemma}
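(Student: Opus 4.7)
The strategy is to lift the algebraic principal-symbol factorization $M(x,\eta) = (\eta_3 - S_0^*)\,D(x)\,(\eta_3 - S_0)$ of the preceding lemma to an operator factorization through the semiclassical symbol calculus, constructing the sub-principal symbols of $S$ (and the matrix $K$) iteratively in $h$. The identity \eqref{decomp} is to be understood modulo an $\mathcal{O}(h^\infty\mathcal{S}(2))$ remainder.

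First I would fix the ansatz $S(x,\eta';h) \sim \sum_{j\geq 0} h^j S_j(x,\eta')$ with $S_0$ as in the previous lemma, and treat $K(x,\eta')\in\mathcal{S}(0)$ as an unknown matrix symbol to be determined. Expand the right-hand side of \eqref{decomp} by the semiclassical (Moyal) composition formula and match against the full semiclassical symbol of $\mathcal{M}$ power by power in $h$. The order $h^0$ matching reproduces exactly the factorization from the previous lemma. At order $h$, collecting the Moyal corrections from composing $S_0^*$, $D$, $S_0$ together with the subprincipal symbol of $\mathcal{M}$ (coming from the derivatives of $\rho,\lambda,\mu$ and of the metric), one obtains a matrix equation of the form
\[(\eta_3 - S_0^*)\, D\, S_1 \;+\; S_1^{\ast}\, D\,(\eta_3 - S_0) \;-\; K\, D\,(\eta_3 - S_0) \;=\; F_1(x,\eta),\]
in which $F_1$ is a polynomial in $\eta_3$ of degree at most one (since $\mathcal{M}$ is second order and the leading quadratic-in-$\eta_3$ term has already been matched at order $h^0$). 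The coefficient of $\eta_3^1$ in this equation fixes $K$, and the coefficient of $\eta_3^0$ then reduces to a Sylvester-type equation for $S_1$. At each higher order $h^j$, $j\geq 2$, the same bookkeeping recurs with $K$ (and its Moyal corrections) now part of the known data, leaving a Sylvester equation in $S_j$ alone.

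Solvability of these Sylvester equations follows from the spectral separation established in the previous lemma: since $\operatorname{Spec}(S_0(x,\eta'))\subset\mathbb{C}_+$ while $\operatorname{Spec}(S_0^{\ast}(x,\eta'))\subset\mathbb{C}_-$, the associated Lyapunov-type operator $X\mapsto S_0^{\ast} D X + X D S_0$ on $3\times 3$ matrices is invertible, so $S_j$ is uniquely determined by the known right-hand side. An induction on $j$ then yields symbol estimates $S_j\in\mathcal{S}(1)$ and $K\in\mathcal{S}(0)$, and a Borel-summation argument combines the $S_j$ into a single classical symbol $S\in\mathcal{S}(1)$ whose asymptotic expansion is $\sum_j h^j S_j$. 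The factorization \eqref{decomp} holds modulo $\mathcal{O}(h^\infty\mathcal{S}(2))$, and because $S_0$ has spectrum in $\mathbb{C}_+$ by construction, \eqref{spec} is immediate.

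The main obstacle is the combinatorial bookkeeping of the Moyal corrections and of the adjoint-symbol expansion $\sigma(S^*)\sim S_0^{\ast}+\mathcal{O}(h)$: one must confirm that at every order the right-hand side $F_j$ remains polynomial of degree at most one in $\eta_3$, and that the $\eta_3$-linear obstruction arising at the order-$h$ step has the precise shape $(\cdots)\,D\,(\eta_3 - S_0)$ that a single matrix $K$ can absorb. This is exactly why the extra $hK$ term is present in \eqref{decomp}: without it, the $\eta_3$-linear contributions produced by $\partial_{x^3}S_0$, $\partial_{x^3}S_0^{\ast}$ and by the sub-principal adjoint correction $\sigma(S^*)-S_0^{\ast}$ would leave an unremovable obstruction and over-determine the system at order $h$.
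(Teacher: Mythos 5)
Your plan is essentially the paper's proof: match the full symbol of $\mathcal{M}$ against the expanded product order by order in $h$, use the linear-in-$\eta_3$ matching condition to express $K$, eliminate $K$ to obtain a (Riccati-type) equation for $S$ alone, and solve the resulting Sylvester equations for the $S_j$ via the spectral separation coming from the principal-symbol factorization. The paper performs the elimination of $K$ once at the level of full symbols and then expands in $h$, while you expand first and eliminate order by order; these are the same computation.

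One correction to your solvability step: the operator you name, $X\mapsto S_0^{*}DX+XDS_0$, is \emph{not} invertible in general --- it is singular whenever $S_0$ and $-S_0^{*}$ share an eigenvalue, which does occur here (e.g.\ wherever $S_0$ has a purely imaginary eigenvalue, as at $\eta'=0$). If you actually eliminate $K$ from your displayed order-$h$ identity, the $S_1^{*}DS_0$ terms cancel and you are left with
\begin{equation*}
(DS_1)\,S_0 - S_0^{*}\,(DS_1) = F_{1,0}+F_{1,1}S_0 ,
\end{equation*}
a Sylvester equation of the form $XB-AX=Y$ with $A=S_0^{*}$, $B=S_0$; this \emph{is} uniquely solvable because $\operatorname{Spec}(S_0^{*})\subset\mathbb{C}_-$ and $\operatorname{Spec}(S_0)\subset\mathbb{C}_+$ are disjoint (equivalently, in the paper's normalization, $\operatorname{Spec}(-D^{-1}(Q+\rho G)S_0^{-1})\subset\mathbb{C}_+$). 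With that sign fixed, the rest of your argument, including the Borel summation and the recursion for $j\ge 2$, goes through as in the paper.
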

\begin{proof}
Following \eqref{mathcal M}, we write the full symbol $\tilde{\sigma}(\mathcal{M})$ of $\mathcal{M}$ in the form,
\begin{multline}\label{full1}
\tilde{\sigma}(\mathcal{M})=D(x)\eta_3^2+(R(x,\eta')+R^T(x,\eta'))\eta_3+
\\
\qquad\qquad\qquad Q(x,\eta')+\rho(x)G(x)+hF_0(x)\eta_3+hF_1(x,\eta'),
\end{multline}
where $F_1(x,\eta')\in \mathcal{S}(1)$, and $F_0(x)$ is a matrix multiplication. We
expand
\[(h D_s-S^*(x,hD_{y'};h)+hK(x,hD_{y'};h))D(x)(h D_s-S(x,hD_{y'};h)),\] yielding
\begin{multline}\label{full2}
h(D_s D)(hD_s)-S^*D (hD_s)+hKD(hD_s)-h(D_s D)S+S^*DS\\
-hKDS-hD(D_sS)-DS(hD_s)+D(hD_s)^2.
\end{multline}
Comparing (\ref{full1}) and (\ref{full2}), we find that $S$ and $K$
should satisfy
\begin{multline}\label{comp1}
-S^*(x,hD_{y'};h)D(x)+hK(x,hD_{y'};h)D(x)-D(x)S(x,hD_{y'};h)+hD_sD(x)
\\
= R(x,hD_{y'})+R^T(x,hD_{y'})+hF_0(x)
\end{multline}
and
\begin{multline}\label{comp2}
-h(D_sD(x))S(x,hD_{y'};h)+S^*(x,hD_{y'};h)D(x)S(x,hD_{y'};h)\\-hK(x,hD_{y'};h)D(x)S(x,hD_{y'};h)
-hD(x)(D_sS(x,hD_{y'};h))\\
=hF_1(x,hD_{y'})+Q(x,hD_{y'})+\rho G(x) .
\end{multline}
Eliminating $K$ in (\ref{comp2}) by using (\ref{comp1}), we get
\begin{equation}\label{Riccati for S}
(hD_s)S+S^2+D^{-1}(R+R^T+hF_0)S+hD^{-1}F_1+D^{-1}Q+D^{-1}\rho G=0.
\end{equation}
By the composition formula for symbols of pseudodifferential operators, we have
\begin{multline}\label{full expansion}
\sum_{\alpha\geq 0}\frac{{\rm i}^{|\alpha|}}{\alpha!}h^{|\alpha|}D_{\eta'}^\alpha S(x,\eta';h)D_{y'}^\alpha S(x,\eta';h)\\
+\sum_{\alpha\geq 0}\frac{{\rm i}^{|\alpha|}}{\alpha!}h^{|\alpha|}D^\alpha_{\eta'}(D^{-1}(x)(R(x,\eta')+R^T(x,\eta'))D_{y'}^\alpha S(x,\eta';h)\\+hD^{-1}(x)F_0(x)S(x,\eta';h)
+hD^{-1}(x)F_1(x,\eta')+D^{-1}(x)Q(x,\eta')\\[0.25cm]
+D^{-1}(x)\rho G+hD_sS(x,\eta';h)=0 .
\end{multline}
We introduce the expansions
\[S(x,\eta';h)\sim\sum_{j\leq 0}h^{-j}S_j(x,\eta') ,\]
\[K(x,\eta';h)\sim\sum_{j\leq 0}h^{-j}K_j(x,\eta') \]
with $S_j\in S(j+1)$, $K_j\in \mathcal{S}(j)$ for every $j$.
We construct $S$ via arranging terms of the same degree of $h$ in (\ref{full expansion}). The terms of order $\mathcal{O}(h^0)$ give
\begin{multline}\label{order 2}
D^{-1}(x)(R(x,\eta')+R^T(x,\eta'))S_0(x,\eta')+D^{-1}(x)Q(x,\eta')\\
+D^{-1}(x)\rho(x)G+S_0^2(x,\eta')=0 .
\end{multline}
Indeed, $S_0$ (cf.~(\ref{fac M})) satisfies this equation. The terms
of order $\mathcal{O}(h)$ give
\begin{multline}\label{order 1}
S_0S_{-1}+S_{-1}S_0+D^{-1}(R+R^T)S_{-1}+\sum_{|\alpha|=1}{\rm i}D_{\eta'}^\alpha S_0D_{y'}^\alpha S_0\\
+\sum_{|\alpha|=1}{\rm i}D_{\eta'}^\alpha(D^{-1}(R+R^T))D_{y'}^\alpha S_0+D^{-1}F_0S_0+D^{-1}F_1+D_sS_0=0 .
\end{multline}
The terms which are of homogeneity of order $\mathcal{O}(h^{-j})$ for $j\leq -2$ yield
\begin{multline}\label{order j}
S_0S_{j}+S_{j}S_0+D^{-1}(R+R^T)S_{j}+\sum_{l+m=j+|\alpha|\atop |\alpha|\geq 1}\frac{{\rm i}^{|\alpha|}}{\alpha!}D_{\eta'}^\alpha S_lD_{y'}^\alpha S_m\\
+\sum_{l+m=j\atop m,l\leq 0}S_lS_m+\sum_{|\alpha|=1}{\rm i}D_{\eta'}^\alpha(D^{-1}(R+R^T))D_{y'}^\alpha S_{j+1}+D^{-1}F_0S_{j+1}+D_sS_{j+1}=0.
\end{multline}
To confirm that (\ref{order 1}) and (\ref{order j}) have solutions, we note that
\[S_jS_0+S_0S_j+D^{-1}(R+R^T)S_j=-D^{-1}(Q+\rho G)S_0^{-1}S_j+S_jS_0,\]
using (\ref{order 2}). Since
\[\operatorname{Spec}(S_0)\subset\mathbb{C}_+,~~~\operatorname{Spec}(-D^{-1}(Q+\rho G)S_0^{-1})\subset\mathbb{C}_+,\]
we can indeed solve for $S_j\,(j\leq -1)$ in (\ref{order 1}) and (\ref{order j}).

After constructing the full symbol of $S$, we determine the full
symbol of $K$ from (\ref{comp1}).
\end{proof}\\

\begin{proposition}
Let $\lambda_0(s)(y,\eta')$ be the principal symbol of
$\Lambda^h(s)$. Then
\begin{equation}\label{princ}
   \lambda_0(s)(y',\eta')
   = -{\rm i} (D(x) S_0(x,\eta') + R^T(x,\eta'))|_{x^3=s} .
\end{equation}
\end{proposition}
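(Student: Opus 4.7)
The strategy is to use the factorization from Lemma \ref{lemma1} to represent the solution $v$ of \eqref{eq.1 tensorial} as an outgoing mode governed by a first-order semiclassical equation, and then substitute this into the trace expression for $\Lambda^h(s)$. Concretely, the factorization
\[
\mathcal{M}=(hD_s-S^*+hK)\,D\,(hD_s-S)
\]
and the spectral condition $\operatorname{Spec}(S_0(x,\eta'))\subset\mathbb{C}_+$ single out one of the two branches of the symbol of $\mathcal{M}$ as the decaying (outgoing) branch in the $x^3$-direction on the semiclassical scale. A standard semiclassical parametrix construction near $\Gamma(s)$, using this branch to transport the boundary datum into the interior and match the growth condition coming from the boundedness of $\Omega$, yields that the true solution $v$ of \eqref{eq.1 tensorial} satisfies
\[
(hD_s-S(x,hD_{y'};h))\,v=0\quad\mathrm{mod}\ \mathcal{O}(h^\infty),
\]
microlocally at $\Gamma(s)$. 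This is the key reduction; all subsequent symbolic calculus is local.

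Next I would compute $\Lambda^h(s)$ at the principal symbol level. In boundary normal coordinates, and using the symmetry $C^{i3kl}=C^{i3lk}$ of the elasticity tensor, the definition of $\Lambda^h(s)$ becomes
\[
(\Lambda^h(s)\psi)^i=-h\sum_{k=1}^3 C^{i3k3}\,\partial_{x^3}v_k-h\sum_{k=1}^3\sum_{l=1}^2 C^{i3kl}\,\partial_{x^l}v_k+(\text{Christoffel contributions}),
\]
evaluated at $x^3=s$, where the Christoffel symbol terms are of order $h$ lower and therefore invisible to the principal symbol. Applying the first-order equation $hD_s v=Sv$ gives $h\partial_{x^3}v\to iS_0(x,\eta')\,v$ on a boundary wavepacket with tangential frequency $\eta'$, while $h\partial_{x^l}v_k\to i\eta_l v_k$ for $l=1,2$. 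Substituting into the previous display yields the principal symbol
\[
\lambda_0(s)(y',\eta')=-i\,D(x)\,S_0(x,\eta')-i\sum_{l=1}^2\bigl(C^{i3kl}(x)\,\eta_l\bigr)_{1\le i,k\le 3}\Big|_{x^3=s}.
\]

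It only remains to identify the second matrix with $R^T(x,\eta')$. By the major symmetry $C^{i3kl}=C^{klli3}$ of the elasticity tensor, one has $C^{i3kl}=C^{kli3}$, hence
\[
\sum_{l=1}^2 C^{i3kl}(x)\,\eta_l=\sum_{l=1}^2 C^{kli3}(x)\,\eta_l=R_{ki}(x,\eta')=R^T(x,\eta')_{ik},
\]
which gives \eqref{princ}. The main obstacle, as I see it, is the first step: justifying rigorously that $v$ solves the outgoing first-order equation modulo $\mathcal{O}(h^\infty)$. This requires constructing a semiclassical parametrix for $hD_s-S$ on a one-sided neighborhood of $\Gamma(s)$, showing the corresponding outgoing propagator is well-defined thanks to $\operatorname{Spec}(S_0)\subset\mathbb{C}_+$, and verifying that the incoming mode makes no contribution to $v$ up to smoothing because it would blow up exponentially away from the boundary. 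Once this is in place, the rest is a bookkeeping computation with the symmetries of $C^{ijkl}$ and with the definitions \eqref{QRD}.
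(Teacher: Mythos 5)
Your proposal is correct and follows essentially the same route as the paper: both use the factorization of $\mathcal{M}$ from Lemma \ref{lemma1} together with $\operatorname{Spec}(S_0)\subset\mathbb{C}_+$ to conclude that the solution (or parametrix) satisfies $(hD_s-S)v\sim 0$ modulo $\mathcal{O}(h^\infty)$ near $\Gamma(s)$, and then read off the symbol of $\Lambda^h(s)$ as $-{\rm i}(DS+R^T)$. Your explicit identification of the tangential-derivative contribution with $R^T$ via the major symmetry of $C^{ijkl}$ (note the harmless typo $C^{klli3}$ for $C^{kli3}$) is a detail the paper leaves implicit, but the argument is the same.
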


\begin{proof}
For a given $s\ (0<s\ll 1)$, let $\mathcal{T}$ be given such that $0<\mathcal{T}-s\ll 1$. The parametrix $U=U(y',x^3;h)$ to the boundary value problem (\ref{transformed eq}) satisfies locally
\[\begin{split}
&\mathcal{M}U\sim 0~~\mathrm{mod}~\mathcal{O}(h^\infty\mathcal{S}(2))\text{ in }\mathbb{R}^2\times[s,\mathcal{T}],\\
&U|_{x^3=\mathcal{T}}=I.
\end{split}\]

Equation (\ref{spec}) implies that the solution operator of the factor
$(hD_s)-S^*(x,hD_{y'};h)+hK(x,hD_{y'};h)$ in the factorization
(\ref{decomp}), for decreasing $s$, is decaying of
order $\mathcal{O}(h^{\infty})$. Hence,
$U$ satisfies
\[
   ((hD_s) - S(x,hD_{y'};h)) U \sim 0~~\mathrm{mod}~\mathcal{O}(h^\infty\mathcal{S}(1)).
\]
Thus,
\[
   (hD_s) U|_{x^3=s}\sim S(x,hD_{y'};h) U|_{x^3=s}
         ~~\mathrm{mod}~\mathcal{O}(h^\infty\mathcal{S}(1)).
\]
Therefore,

\begin{equation}\label{Lambda}
   \Lambda^h(s)(y',hD_{y'};h) \sim
   -{\rm i} (D(x) S(x,hD_{y'};h) + R^T(x,hD_{y'}))|_{x^3=s} ~~\mathrm{mod}~\mathcal{O}(h^\infty\mathcal{S}(1)).
\end{equation}
Then formula (\ref{princ}) follows immediately.
\end{proof}\\

Next we establish the relation between the principal symbol
$\lambda_0(s)$ and the lower order ones $\lambda_{j}(s)$,
$j\leq -1$. Below, we use the notation mod $(T_s^k,h\mathcal{S}(1))$ to
indicate ignoring terms in $h\mathcal{S}(1)$ and in $T_s^k=\{\text{symbol}\ p_s(y',\eta')$ which depends only on the $s$-derivatives of $\lambda(y',s),\mu(y',s),\rho(y',s)$ up to order $k\}$.

\medskip
\noindent
\begin{proposition}
There is a bijective linear map $W(y',s,\eta')$ on the set of
$3\times 3$ matrices which depends only on $\lambda,\mu,\rho$, but not
on their normal derivatives, such that
\begin{equation}
   \lambda_{j}(s)(y',\eta')\sim W(\cdot,s,\cdot)(D_{s} \lambda_{j+1}(s))(y',\eta'))
  ~~\operatorname{mod}~(T_s^{-(1+j)},h\mathcal{S}(1)) ,
\end{equation}
for any $j\leq -1$.
\end{proposition}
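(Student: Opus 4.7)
The plan is to invert the recursion in Lemma \ref{lemma1} to express $S_j$ in terms of $D_sS_{j+1}$ modulo terms of strictly lower $s$-derivative order, and then translate this to a relation between $\lambda_j$ and $D_s\lambda_{j+1}$ using $\lambda_j = -{\rm i}\,DS_j$ (for $j\le -1$) and $\lambda_0 = -{\rm i}(DS_0 + R^T)$. The algebraic core is the Sylvester-type operator acting on $S_j$ on the left of both (\ref{order 1}) and (\ref{order j}), namely
\[
   L(X) := S_0X + XS_0 + D^{-1}(R+R^T)X.
\]
Using (\ref{order 2}) to eliminate $D^{-1}(R+R^T) = -D^{-1}(Q+\rho G)S_0^{-1} - S_0$, this reduces to the standard Sylvester form $L(X) = XS_0 - D^{-1}(Q+\rho G)S_0^{-1}X$. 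Since $\operatorname{Spec}(S_0)\subset\mathbb{C}_+$ and $\operatorname{Spec}(D^{-1}(Q+\rho G)S_0^{-1})\subset\mathbb{C}_-$ are disjoint (as used in Lemma \ref{lemma1}), $L$ is a bijection on the nine-dimensional space of $3\times 3$ matrices and $L^{-1}$ depends only on $\lambda,\mu,\rho$ at $x^3=s$, not on their normal derivatives.

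Next I count $s$-derivative orders. Since $S_\ell\in T_s^{-\ell}$, every product $S_\ell S_m$ with $\ell+m=j$ and $\ell,m\le -1$ lies in $T_s^{-(1+j)}$ (the maximum of the two individual orders is bounded by $-j-1$ since $-\ell,-m\ge 1$ sum to $-j$), and the composition corrections $D_{\eta'}^\alpha S_\ell D_{y'}^\alpha S_m$ with $|\alpha|\ge 1$ and $\ell+m=j+|\alpha|$ are of the same order. For $j\le -2$, the subprincipal term $D^{-1}F_0 S_{j+1}$ likewise falls into $T_s^{-(1+j)}$, so (\ref{order j}) collapses to $L(S_j)\equiv -D_sS_{j+1}\pmod{T_s^{-(1+j)}}$. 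A parallel count on $D_s\lambda_{j+1}=-{\rm i}(D_sD\cdot S_{j+1}+D\cdot D_sS_{j+1})$ gives $D_sS_{j+1}\equiv {\rm i}D^{-1}D_s\lambda_{j+1}$ modulo $T_s^{-(1+j)}$, and combining with $\lambda_j=-{\rm i}DS_j$ yields
\[
   \lambda_j \equiv -D\,L^{-1}\bigl(D^{-1}D_s\lambda_{j+1}\bigr)\pmod{T_s^{-(1+j)}}
\]
for every $j\le -2$. The candidate is therefore $W(X)=-D\,L^{-1}(D^{-1}X)$, a composition of three bijections on $3\times 3$ matrices and hence bijective, depending only on $\lambda,\mu,\rho$.

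The main obstacle is the boundary case $j=-1$, where the $R^T$ inside $\lambda_0$ and the subprincipal terms $D^{-1}F_0 S_0+D^{-1}F_1$ all live in $T_s^1$ and do not drop modulo $T_s^0$. After solving $D\cdot D_sS_0 = {\rm i}D_s\lambda_0 - D_sD\cdot S_0 - D_sR^T$ for $D_sS_0$ and substituting into $L(S_{-1})\equiv -D_sS_0 - D^{-1}F_0 S_0 - D^{-1}F_1 \pmod{T_s^0}$, one obtains a correction linear in the triple $(\partial_s\lambda,\partial_s\mu,\partial_s\rho)$ with coefficients depending only on $\lambda,\mu,\rho$. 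Writing $D_s\lambda_0=\sum_i P_i\,\partial_s c_i$ with $P_i:=\partial_{c_i}\lambda_0$ also $T_s^0$, this correction can be re-expressed as $V(D_s\lambda_0)$ for a linear map $V$ uniquely determined on $\operatorname{span}\{P_1,P_2,P_3\}$ and freely extensible to a complement; the map $W=-D\,L^{-1}(D^{-1}+{\rm i}V)$ then belongs to the right class. The delicate step is to arrange the free extension of $V$ so that $W$ remains bijective, equivalently so that $-1$ is not an eigenvalue of the rank-at-most-$3$ perturbation $L^{-1}DV$; this will follow from the explicit algebraic forms of $D, R, F_0, F_1, S_0$ together with the positive definiteness of $D$, and is where the proof requires the most attention to detail.
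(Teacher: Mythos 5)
Your treatment of the cases $j\le -2$ is correct and coincides with the paper's argument: using \eqref{order 2} to rewrite the Sylvester operator as $L(X)=XS_0-D^{-1}(Q+\rho G)S_0^{-1}X$, noting that $\operatorname{Spec}(S_0)\subset\mathbb{C}_+$ while $\operatorname{Spec}(D^{-1}(Q+\rho G)S_0^{-1})\subset\mathbb{C}_-$ so that $L$ is bijective, and counting $s$-derivative orders to collapse \eqref{order j} to $L(S_j)\equiv -D_sS_{j+1}$ modulo $T_s^{-(1+j)}$. Your resulting map $W(Y)=-D\,L^{-1}(D^{-1}Y)$ is exactly the solution operator of $(Q+\rho G)S_0^{-1}D^{-1}X-XS_0=Y$ used in the paper.

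The gap is the case $j=-1$, which you leave genuinely unresolved. You introduce a correction $V$ defined only on $\operatorname{span}\{\partial_{c_i}\lambda_0\}$, extend it arbitrarily to a complement, and then must show that the rank-at-most-$3$ perturbation $L^{-1}DV$ avoids the eigenvalue $-1$; you defer this to unspecified "explicit algebraic forms" without carrying it out. Moreover, the proposition asserts a \emph{single} $W$ valid for all $j\le -1$, whereas your $W=-DL^{-1}(D^{-1}+{\rm i}V)$ for $j=-1$ differs from the $j\le -2$ map unless $V=0$; so even granting bijectivity, your construction would not prove the statement as written. The missing ingredient is the identification of the subprincipal symbols of $\mathcal{M}$ in boundary normal coordinates: one computes $F_0\equiv D_sD$ and $F_1\equiv D_sR^T$ modulo $(T_s^0,h\mathcal{S}(1))$. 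With these identities the right-hand side of \eqref{order 1} reassembles by the product rule,
\[
-D_sS_0-D^{-1}F_0S_0-D^{-1}F_1\equiv -D^{-1}D_s(DS_0+R^T)={\rm i}D^{-1}D_s\lambda_0(s)\quad\text{mod}~(T_s^0,h\mathcal{S}(1)),
\]
so your correction $V$ vanishes identically and the same $W=-DL^{-1}(D^{-1}\,\cdot\,)$ covers $j=-1$. This computation of $F_0$ and $F_1$ is the one step your proposal does not supply, and it is precisely what the paper's proof rests on.
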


\begin{proof}
Throughout the proof, we read $x^3 = s$. First, we note that for
$j \leq -1$
\[
   \lambda_{j}(s)(y',\eta')
       = -{\rm i} D(x) S_{j}(x,\eta') .
\]
From (\ref{order 1}), we obtain
\[S_0S_{-1}+S_{-1}S_0+D^{-1}(R+R^T)S_{-1}=-D_sS_0-D^{-1}F_0S_0-D^{-1}F_1~~\text{mod}~(T_s^0,h\mathcal{S}(1)).\]
	Moreover,
	\[F_0=D_sD~~\text{mod}~(T_s^0,h\mathcal{S}(1))\]
	and
	\[F_1=D_sR^T~~\text{mod}~(T_s^0,h\mathcal{S}(1))\ .\]
	Hence,
	\[\begin{split}S_0S_{-1}+S_{-1}S_0+D^{-1}(R+R^T)S_{-1}&=-D_sS_0-D^{-1}(D_sD)S_0-D^{-1}(D_sR^T)\\
	&=-D^{-1}D_s(DS_1+R^T)\\
	&={\rm i}D^{-1}D_s\lambda_1(s)~~\text{mod}~(T_s^0,h\mathcal{S}(1)).
	\end{split}\]
	Following the proof of Lemma \ref{lemma1}, we find that $\lambda_{-1}(s)$ satisfies
	\[(Q+\rho G)S_0^{-1}D^{-1}\lambda_{-1}(s)-\lambda_{-1}(s)S_0=D_s\lambda_0(s)~~\text{mod}~(T_s^0,h\mathcal{S}(1)).\]
	We note that
\[\operatorname{Spec}(S_0)\subset\mathbb{C}_+,
   ~~~\operatorname{Spec}(-(Q+\rho G)S_0^{-1}D^{-1})\subset\mathbb{C}_+.\]
Hence, defining $W(x,\eta')(Y)$ as the solution $X$ of
\[
   (Q+\rho G)S_1^{-1}D^{-1}X-X S_0=Y ,
\]
we obtain
\[
   \lambda_{-1}(s)(y',\eta') = W(\cdot,s,\cdot) D_s \lambda_0(s)(y',\eta')~~\operatorname{mod}~(T_s^0,h\mathcal{S}(1)).\]
For $j \leq -2$, $S_{j}$ contains $s$-derivatives of $\lambda,\,\mu,\,\rho$ up to order $-j$. Then, inductively, we get
	\[(Q+\rho G)S_0^{-1}D^{-1}\lambda_{j}(s)-\lambda_{j}(s)S_0=D_s\lambda_{j+1}(s)~~\operatorname{mod}~(T_s^{-(1+j)},h\mathcal{S}(1)) .
\]
Thus, we have proved the claim.
\end{proof}\\

We conclude this section by presenting the Riccati equation that
$\Lambda(s)$ satisfies:\\

\begin{corollary}
Define
\begin{equation}\label{modified DNmap}
   \hat{\Lambda}(s)={\rm i} D^{-1} \Lambda^h(s) ;
\end{equation}
$\hat{\Lambda}(s)$ satisfies, $\mathrm{mod}$ $\mathcal{O}(h^\infty\mathcal{S}(1))$, the Riccati
equation
\begin{equation}\label{Riccati}
   hD_s\hat{\Lambda}(s) + J_1(s) \hat{\Lambda}(s)
       + \hat{\Lambda}(s) K_1(s) + \hat{\Lambda}(s)^2
          + F_2(s) = 0~~~(0 \le s\ll 1) ,
\end{equation}
where
\[
   J_1(s) = D^{-1}(R+hF_0) ,\quad K_1(s) = -D^{-1}R^T ,
\]
and
\[
   F_2(s) = -hD_s (D^{-1} R^T) - D^{-1} (R + R^T + hF_0) D^{-1} R^T
        + D^{-1} (hF_1 + Q + \rho  G) + (D^{-1} R^T)^2 ,
\]
with $x^3 = s$.
\end{corollary}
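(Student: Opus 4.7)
The plan is to transport the Riccati equation \eqref{Riccati for S} for $S$ onto $\hat\Lambda(s)$ by a change of unknown. From the symbol identity \eqref{Lambda} established in the previous proposition, $\Lambda^h(s) \sim -\mathrm{i}(DS + R^T)|_{x^3=s}$ modulo $\mathcal{O}(h^\infty\mathcal{S}(1))$, so multiplying by $\mathrm{i}D^{-1}$ yields
\[
  \hat\Lambda(s) \sim S + D^{-1}R^T \quad \operatorname{mod}\ \mathcal{O}(h^\infty\mathcal{S}(1)),
\]
or equivalently $S \sim \hat\Lambda(s) - D^{-1}R^T$ at the level of semiclassical pseudodifferential operators.

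Next I would substitute $S = \hat\Lambda(s) - D^{-1}R^T$ into \eqref{Riccati for S}, interpreting $S^2$ as operator composition. The term $(hD_s)S$ produces $hD_s\hat\Lambda(s) - hD_s(D^{-1}R^T)$; the term $S^2$ yields $\hat\Lambda(s)^2 - \hat\Lambda(s)\,D^{-1}R^T - D^{-1}R^T\,\hat\Lambda(s) + (D^{-1}R^T)^2$; and the term $D^{-1}(R+R^T+hF_0)S$ expands to $D^{-1}(R+R^T+hF_0)\hat\Lambda(s) - D^{-1}(R+R^T+hF_0)D^{-1}R^T$.

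Collecting the results by $\hat\Lambda(s)$-content is then routine bookkeeping. The operators acting on $\hat\Lambda(s)$ from the left combine, via the cancellation $D^{-1}(R+R^T+hF_0) - D^{-1}R^T = D^{-1}(R+hF_0) = J_1(s)$, to $J_1(s)\hat\Lambda(s)$; the operator acting from the right is $-\hat\Lambda(s)D^{-1}R^T = \hat\Lambda(s)K_1(s)$; the quadratic contribution is exactly $\hat\Lambda(s)^2$; and the remaining $\hat\Lambda$-free pieces, namely $-hD_s(D^{-1}R^T) + (D^{-1}R^T)^2 - D^{-1}(R+R^T+hF_0)D^{-1}R^T + D^{-1}(hF_1 + Q + \rho G)$, are by inspection the definition of $F_2(s)$. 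Assembling these pieces reproduces \eqref{Riccati}.

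The only real subtlety, and the closest thing to an obstacle, is to ensure that the substitution respects the full semiclassical calculus and not merely its principal-symbol part. This works because \eqref{Riccati for S} itself was obtained by summing the full composition expansion \eqref{full expansion}: a finite algebraic combination of compositions of operators that agree modulo $\mathcal{O}(h^\infty\mathcal{S}(1))$ still agrees modulo $\mathcal{O}(h^\infty\mathcal{S}(1))$, so the resulting Riccati equation for $\hat\Lambda(s)$ holds to the order claimed in the corollary.
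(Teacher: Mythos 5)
Your proposal is correct and is precisely the computation the paper intends: the paper's proof is the single line ``this follows straightforwardly from (\ref{Riccati for S}) and (\ref{Lambda})'', and you have carried out exactly that substitution $S = \hat\Lambda(s) - D^{-1}R^T$ into \eqref{Riccati for S}, with the bookkeeping of $J_1$, $K_1$, $F_2$ checking out term by term. Your closing remark on why the substitution is legitimate at the level of the full semiclassical calculus, not just principal symbols, is a useful explicit addition that the paper leaves implicit.
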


\begin{proof}
This follows straightforwardly from (\ref{Riccati for S}) and
(\ref{Lambda}).
\end{proof}\\

\medskip


Invoking a forward Euler scheme to solve the Riccati equation, we
obtain an approximate propagation of the boundary data into the
interior of $\Omega$, layer by layer. With the explicit reconstruction
that will be presented in the next section, we obtain formally a
layer-stripping algorithm for our inverse boundary value problem. The Riccati-type equation is expected to be 
highly unstable, especially for high frequency modes \cite{SCII}. So the propagation of DN map
will deteriorate. This reveals the ill-posedness of the problem of recovering the parameter in the interior.

\section{Reconstruction of the Lam\'{e} parameters and density}${}$
\newline
\indent
We present the reconstruction of $(\lambda, \mu, \rho)$, as well
as all their derivatives at $x^3=s$ from the full symbol of
$\Lambda(s)$. We first consider the principal symbol of operator
$-h^2\sum_{j,k,l=1}^3\nabla_j (C^{ijkl}\varepsilon_{kl}(v))$. By the
transformation rule of tensor, we have
\begin{equation}\label{transformation}
\begin{array}{l}
N := (\displaystyle\sum_{j,l=1}^3 C^{ijkl}\eta_j\eta_l; 1\le i,\, k\,\le 3)=J\dot{N}J^T\\
\qquad\text{with}\,\, \dot{N} = (\displaystyle\sum_{j,l=1}^3 \dot{C}_{ijkl}\xi_j\xi_l; 1\le i,\, k\,\le 3).
\end{array}
\end{equation}
For any $x$ near $\Gamma(s)$, we choose a unit vector $n(x) =
(n_1,n_2,n_3)\in\mathbb{R}^3$ depending smoothly on $x$. Then any $\xi = (\xi_1,\xi_2,\xi_3)\in
\mathbb{R}^3$ can be written as $\xi=qn(x)+m(x,\xi)$ for some $q \in \mathbb{R}$
and $(m_1,m_2,m_3) =: m \perp n$. We define $\dot{D} = \dot{D}(x)$,
$\dot{R} = \dot{R}(x,\xi)$, $\dot{Q} = \dot{Q}(x,\xi)$ as follows,
\begin{equation}
\begin{array}{l}
   \dot{D} = (\displaystyle \sum_{j,l=1}^3
       \dot{C}_{ijkl} n_j n_l; 1\le i,\, k\,\le 3) ,
\\
   \dot{R} = (\displaystyle \sum_{j,l=1}^3
       \dot{C}_{ijkl} m_j n_l; 1\le i,\, k\,\le 3) ,
\\
   \dot{Q} = (\displaystyle \sum_{j,l=1}^3
       \dot{C}_{ijkl} m_j m_l; 1\le i,\, k\,\le 3) .
\end{array}
\end{equation}
We have (compare with (\ref{mathcal M}))
\begin{equation}
\dot{M}=\dot{D}q^2+{\large(}\dot{R}+(\dot{R})^T{\large)}q+\dot{Q}+\rho.
\end{equation}
and a smooth factorization according to \eqref{fac M}. More precisely,
there exists a unique $\dot{S}_0=\dot{S}_0(x,\xi)$ depending
smoothly on $x\in\overline\Omega$, and homogeneous of degree
one with respect to $\xi$ such that
\begin{equation}
   \dot{M}={\large(}q-(\dot{S}_0)^\ast{\large)}\dot{D}(q-\dot{S}_0) ,\quad
   \operatorname{Spec}(\dot{S}_0)\subset\mathbb{C}_+.
\end{equation}

We let the direction of $n(x)$ be aligned with the $x^3$ axis.  Using
\eqref{transformation} we find that
\begin{equation}\label{fac M b_normal}
M={\large(}q-J(\dot{S}_0{\large)}^\ast J^{-1})(J\dot{D}J^T){\large(}q-(J^T)^{-1}\dot{S}_0J^T{\large)} .
\end{equation}
Since the linear mapping defined by the matrix $(J^T)^{-1}$ preserves
the orthogonality $m(x,\xi)\perp n(x)$ and the length of $n(x)$, we have
$q=\eta_3$.  We also have
$D = J \dot{D} J^T$. Hence, by the
uniqueness of factorization \eqref{fac M},
\begin{equation}
   S_0=(J^T)^{-1} \dot{S}_0 J^T .
\end{equation}
Combining this with \eqref{princ} and the tensorial transformation
$R=J\dot{R}J^T$ of $\dot{R}$, we obtain
\begin{equation}
\lambda_0(s) = -{\rm i}J(\dot{D}\dot{S}_0+\dot{R}^T)J^T.
\end{equation}

Due to the isotropy of the elasticity tensor, a rotation of
coordinates $(x_1,x_2,x_3)$ does not affect the form
$\dot{D},\,\dot{S}_1,\,\dot{R}$ and. Hence, for any $x$, we just assume $\xi=(\xi',\xi_3)$, and $n(x)=(0,0,1)$. Then $m(x,\xi)=\xi'$, so $\dot{S}_0(x,\xi)$ depends only on $(x,\xi')$,
\[
   \dot{S}_0(x,\xi) = \dot{A}(x,\xi') + {\rm i} \dot{B}(x,\xi')
\]
and
\[
   \dot{D} = \operatorname{diag}(\mu,\mu,\lambda+2\mu) ,\quad
    \dot{A} = \dot{D}^{-1/2} \tilde{A}\dot{D}^{1/2} ,\quad
   \dot{B} = \dot{D}^{-1/2} \tilde{B} \dot{D}^{1/2} ,
\]
in which
\begin{equation}\label{ABform}
   \tilde{A}=P\left(\begin{array}{ccc}
0 & 0 & -\alpha_1\\
0 & 0 & 0\\
-\alpha_2 & 0 & 0
\end{array}\right)P^* ,\quad
\tilde{B}=P\left(\begin{array}{ccc}
a & 0 & 0\\
0 & b & 0\\
0 & 0 & c
\end{array}\right)P^*
\end{equation}
with
\[
P = P(\xi') = \left(\begin{array}{ccc}
\xi_1|\xi'|^{-1} & \xi_2|\xi'|^{-1} & 0\\
\xi_2|\xi'|^{-1} & -\xi_1|\xi'|^{-1} & 0\\
0 & 0 & 1
\end{array}\right) ,
\]
\[\alpha_1=\frac{(\lambda+\mu)|\xi'|}{\sqrt{\mu(\lambda+2\mu)}}\frac{1}{1+\gamma} ,\quad \alpha_2=\gamma\alpha_1 ,\quad b=\sqrt{\frac{\mu|\xi'|^2+\rho}{\mu}},\]
\[
   c = \frac{1}{1+\gamma}\sqrt{(1+\gamma)^2\frac{\mu|\xi'|^2+\rho}{\lambda+2\mu}-\frac{(\lambda+\mu)^2|\xi'|^2}{\mu(\lambda+2\mu)}} ,\quad
   a = \gamma c
\]
and
\[
   \gamma = \sqrt{\frac{((\lambda+2\mu)|\xi'|^2
      + \rho)(\lambda+2\mu)}{\mu(\mu|\xi'|^2+\rho)}} .
\]
We substitute $\xi' = [|\xi'|,0]^T$ and identify $\xi'$ with $|\xi'|$; then, after some calculations, we find that
\[\begin{split}&\lambda_0(s)(y,\eta)\\
=&-{\rm i}J\large(\dot{D}(x)\dot{S}_0(x,\xi')+\dot{R}^T(x,\xi')\large)J^T=J\dot{\Lambda}_0J^T\,\end{split}\]
with
\[\begin{split}
\dot{\Lambda}_1&=(\dot{\lambda}_1^{(ik)} ;\,1\le i,\,k\le 3)\\
&=\left(\begin{array}{ccc}
a\mu & 0 & {\rm i}\alpha_1\sqrt{\mu(\lambda+2\mu)}-{\rm i}\mu|\xi'|\\
0 & b\mu & 0\\
{\rm i}\alpha_2\sqrt{\mu(\lambda+2\mu)}-{\rm i}\lambda|\xi'|& 0 & c(\lambda+2\mu)
\end{array}\right).\end{split}\]
Since $J$ is known and independent of $\lambda,\,\mu,\,\rho$, we only need to
consider $\dot{\Lambda}_1$ for recovering  $\lambda,\,\mu,\,\rho$ and their derivatives at $x^3=s$.

\medskip
\noindent
\noindent\textit{First step.} We will recover $\lambda,\mu,\rho$.
Note that $\,\dot{\lambda}_0^{(22)}(x,\xi') = \sqrt{|\xi'|^2\mu^2+\rho\mu}$. Then we can first get $\mu$ and $\rho$ as follows. Observe that
 \[\dot{\lambda}_0^{(22)}(x,\sqrt{2}c_0^{-1})^2-\dot{\lambda}_0^{(22)}(x,c_0^{-1})^2=c_0^{-2}\mu^2,\] 
for any scaling constant $c_0>0$. Hence we set $c_0=1$ in the rest of this section. Then we find
\[\mu=\sqrt{\dot{\lambda}_0^{(22)}(x,\sqrt{2}c_0^{-1})^2-\dot{\lambda}_1^{(22)}(x,c_0^{-1})^2}\]
and
\[\rho=\frac{1}{\mu}(\dot{\lambda}_0^{(22)}(x,c_0^{-1})^2-\mu^2).\]
For $\lambda$, first notice that we have
\[\frac{\dot{\lambda}_0^{(11)}(x,c_0^{-1})^2}{\dot{\lambda}_0^{(33)}(x,c_0^{-1})^2}=\frac{(\lambda+2\mu+\rho)\mu}{(\mu+\rho)(\lambda+2\mu)}.\]
Since we have already computed $\mu$ and $\rho$, we get
\[\frac{\lambda+2\mu+\rho}{\lambda+2\mu}=1+\frac{\rho}{\lambda+2\mu},\]
and then obtain $\lambda$.\\

\noindent\textit{Second step.} We recover $\partial\lambda,\,\partial\mu,\,\partial\rho$ of $\lambda,\,\mu,\,\rho$, we first note that
\begin{equation}\label{derivative1}2\mu\partial\mu=\partial(\dot{\lambda}_0^{(22)}(x,\sqrt{2}c_0^{-1})^2-\dot{\lambda}_1^{(22)}(x,c_0^{-1})^2)\end{equation}
from which we can recover $\partial\mu$. Then from
\begin{equation}\label{derivative2}\rho\partial\mu+\mu\partial\rho=\partial(\dot{\lambda}_1^{(22)}(x,c_0^{-1})^2-\mu^2),\end{equation}
we recover $\partial\rho$. Finally we recover $\partial\lambda$ from
\begin{equation}\label{derivative3}\partial\lambda+2\partial\mu=\partial\left(\left(\frac{\mu+\rho}{\mu}\frac{\dot{\lambda}_1^{(11)}(x,c_0^{-1})^2}{\dot{\lambda}_1^{(33)}(x,c_0^{-1})^2}-1\right)^{-1}\rho\right).\end{equation}\\

\noindent
\textit{Final step.} We recover higher order derivatives of
$\lambda,\mu,\rho$. Differentiating equations
(\ref{derivative1})-(\ref{derivative3}) $k-1$ times, we obtain linear
equations for $\partial^k\mu,\partial^k\lambda$ and
$\partial^k\rho$. The coefficients for them are the same as those for
$\partial\lambda,\,\partial\mu,\,\partial\rho$ in
(\ref{derivative1})-(\ref{derivative3}). Thus we can recover
$\partial^k\mu,\partial^k\lambda,\partial^k\rho$, using
$\partial^j\dot{\Lambda}_0(s) (y,\eta)$ for $j=1,2,\cdots, k$, and
$\partial^j\mu,\partial^j\lambda,\partial^j\rho$ for $j=1,2,\cdots,
k-1$. So we can recover all the derivatives of $\lambda,\mu,\rho$
recursively.

\section{Further implications}${}$
\newline
\indent
In this section, we show how to get a decomposition into incoming/outgoing waves via the factorization $(\ref{fac M})$. In this section, we take $T=\infty$ and allow $\tau$ to take complex values. We assume $\tau$ is  in the set
\[\Pi_{0}=\{\tau\in\mathbb{C};\Re \tau:=\text{real part of $\tau$}>0\}.\]

For $u\in W((0,\infty);\Omega)$, we introduce the Laplace transform $\mathcal{L}$ for $\tau\in\Pi_0$:
\[(\mathcal{L}u)(x,\tau)= \int_{0}^\infty e^{-\tau t}u(x,t)\mathrm{d}t,\]
where $(\mathcal{L}u)(\cdot,\tau)\in H^2(\Omega)$.
The inverse Laplace transform is given by
\[(\mathcal{L}^{-1}v)(x,t)=\frac{1}{2\pi {\rm i}}\int_{\gamma-{\rm i}\infty}^{\gamma+{\rm i}\infty}e^{\tau t}v(x,\tau)\mathrm{d}\tau,\]
with $\gamma\in\Pi_{0}$.

Applying above Laplace transform to (\ref{EQ no.1}), we get
\[\mathcal{M}v=\rho \hat{\tau}^2g^{ik}v_k-h^2\displaystyle\sum_{j,k,l=1}^3\nabla_j (C^{ijkl}\varepsilon_{kl}(v))=0,\]
with $h=\frac{1}{|\tau|}$, $\hat{\tau}=\frac{\tau}{|\tau|}$. $\mathcal{M}$ can be viewed as a semiclassical pseudodifferetial operator with a small parameter $h=\frac{1}{|\tau|}$

We rewrite above equation up to the leading order terms in the following form
\begin{equation}
hD_s\left(\begin{array}{c}
v\\
hD_sv
\end{array}\right)\sim\left(\begin{array}{cc}
0 & 1\\
-D^{-1}(Q+\rho G\tau^2) & -D^{-1}(R+R^T)
\end{array}\right)\left(\begin{array}{c}
v\\
hD_sv
\end{array}\right)~~\mathrm{mod}~\mathcal{O}(h\mathcal{S}(2)).
\end{equation}

Denote $M(x,\hat{\tau},\eta)$ to be the principal symbol of $\mathcal{M}$, as in $(\ref{fac M})$, we have the factrorization
\begin{equation}\label{Mcomplex}
M(x,\hat{\tau},\eta)=(\eta_3-S_0^-(x,\hat{\tau},\eta'))D(x)(\eta_3-S_0^+(x,\hat{\tau},\eta')),
\end{equation}
for $\tau\in\Pi_0$.
Here, similar to (\ref{ABform}),
\begin{equation}
   S_0^+(x,\hat{\tau},\eta')=(J^T)^{-1}\dot{D}^{-1/2} (\tilde{A}+{\rm i}\tilde{B} )\dot{D}^{1/2}J^T
\end{equation}
and
\begin{equation}
   S_0^-(x,\hat{\tau},\eta')=J\dot{D}^{1/2}( \tilde{A}^T-{\rm i}\tilde{B}^T)\dot{D}^{-1/2}J^{-1},
\end{equation}
where 

\begin{equation}\label{ABformtau}
   \tilde{A}(x,\hat{\tau},\eta')=P\left(\begin{array}{ccc}
0 & 0 & -\alpha_1\\
0 & 0 & 0\\
-\alpha_2 & 0 & 0
\end{array}\right)P^* ,\quad
\tilde{B}(x,\hat{\tau},\eta')=P\left(\begin{array}{ccc}
a & 0 & 0\\
0 & b & 0\\
0 & 0 & c
\end{array}\right)P^*
\end{equation}
with
\[\alpha_1=\frac{(\lambda+\mu)|\xi'|}{\sqrt{\mu(\lambda+2\mu)}}\frac{1}{1+\gamma} ,\quad \alpha_2=\gamma\alpha_1 ,\quad b=\sqrt{\frac{\mu|\xi'|^2+\rho\hat{\tau}^2}{\mu}},\]
\[
   c = \frac{1}{1+\gamma}\sqrt{(1+\gamma)^2\frac{\mu|\xi'|^2+\rho\hat{\tau}^2}{\lambda+2\mu}-\frac{(\lambda+\mu)^2|\xi'|^2}{\mu(\lambda+2\mu)}} ,\quad
   a = \gamma c
\]
and
\[
   \gamma = \sqrt{\frac{((\lambda+2\mu)|\xi'|^2
      + \rho\hat{\tau}^2)(\lambda+2\mu)}{\mu(\mu|\xi'|^2+\rho\hat{\tau}^2)}} .
\]

In all formulas for $a,b,c,\alpha_1,\alpha_2,\gamma$, $\sqrt{z}$ is defined on $\mathbb{C}\setminus(-\infty,0]$ with $\Re\sqrt{z}>0$. Indeed $\alpha_1$, $\gamma$ and $b$ are well defined, and $\Re\gamma>0$. Furthermore, we note that if $\Im\tau>0$, then $\Im\{\frac{\mu|\xi'|^2+\rho\hat{\tau}^2}{\lambda+2\mu}\}>0$ and $\Im\gamma<0$, while $\Im(1+\gamma)^2<0$. Thus $c$ is well defined for $\Im\tau>0$. Similarly, we can verify $c$ is well defined for $\Im\tau<0$.

We now show that
\begin{equation}\label{local1}
\mathrm{Spec}(S_0^+)\subset\mathbb{C}_+,~~~\mathrm{Spec}(S_0^-)\subset\mathbb{C}_-
\end{equation}
for any $\tau\in\Pi_0$. The spectrum of $S_0^+$ unified with the spectrum of $S_0^-$ are the roots of $\det(M(x,\hat{\tau},\eta))$ as a polynomial in $\eta_3$ for $\tau\in\Pi_0$. To obtain statement (\ref{local1}), we only need to show that there are no real roots of $\det(M(x,\hat{\tau},\eta))$ for any $\tau\in\Pi_0$. Then because (\ref{local1}) holds true for positive $\tau$, the eigenvalues of $S_0^+$ or $S_0^-$ cannot intersect the real line. Roots of $\det(M(x,\hat{\tau},\eta))$, which are same as the roots of $\det(\dot{M})$ in $q$ satisfy
\[\mu q^2+\mu|\xi'|^2+\rho\hat{\tau}^2=0\]
or
\[\left((\lambda+2\mu)q^2+\mu|\xi'|^2+\rho\hat{\tau}^2\right)\left(\mu q^2+(\lambda+2\mu)|\xi'|^2+\rho\hat{\tau}^2\right)-(\lambda+\mu)^2q^2|\xi'|^2=0.\]
For $\tau\in(0,\infty)$, we have already concluded that $q$ could not be real. For non-real $\tau$, if there is a real $q$ satisfying the above equations, then
\[\Im\hat{\tau}^2=0,\]
which is not possible. 

Let
\[\begin{split}P
= \left(\begin{array}{cc}
(S_0^--S_0^+)^{-1}S^-_0 & -(S_0^--S_0^+)^{-1}\\
-(S_0^--S_0^+)^{-1}S_0^+ & (S_0^--S_0^+)^{-1}
\end{array}
\right)
.\end{split}\]
and denote its inverse by $P^{-1}$ which is given by
\[P^{-1}=\left(\begin{array}{cc}
1 & 1\\
S_0^+ & S_0^-
\end{array}
\right).\]
Then we find that
\[\left(\begin{array}{cc}
0 & 1\\
-D^{-1}(Q+\rho G\hat{\tau}^2) & -D^{-1}(R+R^T)
\end{array}\right)= P^{-1}\left(\begin{array}{cc}
S_0^+ & 0\\
0 & S_0^-
\end{array}\right)P+\mathrm{l.o.t.}.\]
It follows that
\[\left(\begin{array}{c}
v_+\\
v_-
\end{array}\right)=P\left(\begin{array}{c}
v\\
hD_sv
\end{array}\right)\in W((0,\infty);\Omega)\]
satisfies
\[(hD_s-S_0^+)v_+= 0 ~~\mathrm{mod}~\mathcal{O}(h\mathcal{S}(1))\]
and 
\[(hD_s-S_0^-)v_-= 0~~\mathrm{mod}~\mathcal{O}(h\mathcal{S}(1)).\]

We can view $u_+=\mathcal{L}^{-1}v_+\,\,(u_-=\mathcal{L}^{-1}v_-)$ as representing incoming (outgoing) waves. This identification is justified by noticing that, for example, $v_+(\cdot,\tau)=(\mathcal{L}u_+)(\cdot,\tau)$ is exponentially decaying with increasing $s$ for $\Re\tau>0$. 

We emphasize that the transformed DN map $\Lambda$, here, is different from the one introduced before, that is,
\[\Lambda^\tau=h\mathcal{L}\Lambda_T\mathcal{L}^{-1}.\]
With the relation between the normal directive $D_s$ and the DN map $\Lambda$,
\[hD_s={\rm i}D^{-1}(\Lambda^\tau-R^T)~~\mathrm{mod}~\mathcal{O}(h\mathcal{S}(1)),\]
 we have
\[\left(\begin{array}{c}
v_+\\
v_-
\end{array}\right)= P\left(\begin{array}{c}
v\\
{\rm i}D^{-1}(\Lambda^\tau-R^T)v.
\end{array}\right)~~\mathrm{mod}~\mathcal{O}(h\mathcal{S}(1))\]
on boundary. 
We note that
\[P=(S_0^--S_0^+)^{-1}\left(\begin{array}{cc}
1 & 0\\
0 & -1
\end{array}
\right)P^{-\star}\left(\begin{array}{cc}
0 & -1\\
1 & 0
\end{array}
\right) \]
with
\[P^{-\star}=\left(\begin{array}{cc}
1 & S_0^-\\
1 & S_0^+
\end{array}
\right),\]
where $P^{-\star}=P^{-*}$ for real $\tau$.

After we have identified the elastic parameters on the boundary, we will then know $R$ in $(\ref{QRD})$. Then we can have a decomposition into incoming and outgoing wave constituents ($u^+$ and $u^-$ respectively) on the boundary using $\Lambda_T$ and $R$. Seismic imaging (inverse scattering), array receiver functions, and tomography (also using free-surface multiple scattering) all rely on this decomposition. Discussion of seismic migration and inversion schemes based on this decomposition can be found in \cite{dHdH, K}.
\\

{\bf Acknowledgement}
We thank the anonymous reviewers for very useful comments to improve this paper.

\end{document}